\theoremstyle{plain}
\newtheorem{definition}{Definition}
\newtheorem{lemma}{Lemma}
\newtheorem{proposition}{Proposition}
\newtheorem{remark}{Remark}
\newtheorem{theorem}{Theorem}
\numberwithin{equation}{section}
\def\XXint#1#2#3{{\setbox0=\hbox{$#1{#2#3}{\int}$ }
\vcenter{\hbox{$#2#3$ }}\kern-.6\wd0}}
\begin{document}
\title[]{A Liouville theorem for the Chern--Simons--Schr{\"o}dinger equation}

\author{Benjamin Dodson}

\begin{abstract}
In this paper we prove a Liouville theorem for the Chern--Simons--Schr{\"o}dinger equation. This result is consistent with the soliton resolution conjecture for initial data that does not lie in a weighted space. See \cite{kim2022soliton} for the soliton resolution result in a weighted space.
\end{abstract}
\maketitle

\section{Introduction}
The self-dual Chern--Simons--Schr{\"o}dinger equation with $m$-equivariance is
\begin{equation}\label{1.1}
i(\partial_{t} + i A_{t}[u]) u + \partial_{r}^{2} u + \frac{1}{r} \partial_{r} u - (\frac{m + A_{\theta}[u]}{r})^{2} u + |u|^{2} u = 0,
\end{equation}
where
\begin{equation}\label{1.2}
A_{t}[u] = -\int_{r}^{\infty} (m + A_{\theta}[u]) |u|^{2} \frac{dr'}{r'}, \qquad \text{and} \qquad A_{\theta}[u] = -\frac{1}{2} \int_{0}^{r} |u|^{2} r' dr'.
\end{equation}

The Chern--Simons--Schr{\"o}dinger equation was introduced in \cite{jackiw1990classical} as a nonrelativistic planar quantum electromagnetic model that exhibits self-duality. It is a gauge covariant nonlinear Schr{\"o}dinger equation on $\mathbb{R}^{2}$. See also \cite{dunne2009self}, \cite{jackiw1990soliton}, \cite{jackiw1991time}. The model $(\ref{1.1})$ is derived after fixing the Coulomb gauge condition and imposing the equivariant symmetry on the scalar field $\phi$:
\begin{equation}\label{1.2.1}
\phi(t,x) = u(t, r) e^{im \theta}.
\end{equation}
See \cite{kim2020blow}, \cite{kim2019pseudoconformal}, and \cite{kim2020construction}.

\begin{remark}
The non-equivariant Chern--Simons--Schr{\"o}dinger equation will not be discussed here. See \cite{berge1995blowing}, \cite{huh2013energy}, \cite{lim2018large}, \cite{liu2014local}, and \cite{oh2015decay} for more information.
\end{remark}

The solution to $(\ref{1.1})$ has the conserved quantities mass and energy.
\begin{equation}\label{1.3}
\aligned
E[u] &= \int \frac{1}{2} |\partial_{r} u|^{2} + \frac{1}{2} (\frac{m + A_{\theta}[u]}{r})^{2} |u|^{2} - \frac{1}{4} |u|^{4} dx, \\
M[u] &= \int |u|^{2} dx,
\endaligned
\end{equation}
where we denote $\int f(r) = 2 \pi \int f(r) r dr$. Indeed, $(\ref{1.1})$ is the Hamiltonian PDE for the energy in $(\ref{1.3})$. We also use the inner product
\begin{equation}\label{1.3.1}
\langle f, g \rangle = Re \int f(r) \overline{g(r)} r dr.
\end{equation}
 Integrating by parts,
\begin{equation}\label{1.4}
\aligned
\frac{d}{dt} E[u] = \int Re (\partial_{r} \bar{u})(\partial_{r} u_{t}) r dr + \int (\frac{m + A_{\theta}[u]}{r})^{2} Re(\bar{u} u_{t}) r dr \\ - \int |u|^{2} Re(\bar{u} u_{t}) r dr - \int \frac{m + A_{\theta}[u]}{r} |u|^{2} \int_{0}^{r} Re(\bar{u} u_{t}) r' dr' dr \\
= -\langle (\partial_{rr} + \frac{1}{r} \partial_{r}) \bar{u} , u_{t} \rangle + \langle (\frac{m + A_{\theta}[u]}{r})^{2} u, u_{t} \rangle - \langle |u|^{2} u, u_{t} \rangle - \int_{0}^{\infty} |u|^{2} \int_{r}^{\infty} \frac{m + A_{\theta}[u]}{r} Re(\bar{u} u_{t}) dr r' dr' \\
= -\langle (\partial_{rr} + \frac{1}{r} \partial_{r}) \bar{u} , u_{t} \rangle + \langle (\frac{m + A_{\theta}[u]}{r})^{2} u, u_{t} \rangle - \langle |u|^{2} u, u_{t} \rangle + \langle A_{t}[u] \bar{u}, u_{t} \rangle = \langle i u_{t}, u_{t} \rangle = 0.
\endaligned
\end{equation}
Thus,
\begin{equation}\label{1.4.1}
\partial_{t} u = -i \nabla E[u],
\end{equation}
where $\nabla$ (acting on a functional) is the Frechet derivative with respect to the inner product $\langle \cdot, \cdot \rangle$. Also,
\begin{equation}\label{1.4.2}
\partial_{t} M[u] = Re \int \bar{u} u_{t} = Re \int \bar{u} (i \partial_{r}^{2} u + \frac{i}{r} \partial_{r} u) = 0.
\end{equation}

The energy functional can be written in the self-dual form
\begin{equation}\label{1.4.3}
E[u] = \int \frac{1}{2} |D_{u} u|^{2},
\end{equation}
where $D_{u}$ is the covariant Cauchy--Riemann operator defined by
\begin{equation}\label{1.4.4}
D_{u} f = \partial_{r} f - \frac{m + A_{\theta}[u]}{r} f.
\end{equation}
Indeed,
\begin{equation}\label{1.4.5}
-Re \int (\partial_{r} \bar{f}) \cdot (\frac{m + A_{\theta}[u]}{r}) f(r) r dr = -\frac{1}{2} \int (m + A_{\theta}[u]) \partial_{r}(|f|^{2}) dr = \frac{1}{2} \int \partial_{r} A_{\theta}[u] |f|^{2} = -\frac{1}{4} \int_{0}^{\infty} |f|^{4} r dr.
\end{equation}

\begin{definition}[Bogomol'nyi operator]\label{d1.1}
The operator $u \mapsto D_{u}$ is called the Bogomol'nyi operator. Due to $(\ref{1.4.3})$ and the Hamiltonian structure, any static solutions to $(\ref{1.1})$ are given by solutions to the Bogomol'nyi equation
\begin{equation}\label{1.4.6}
D_{Q} Q = 0.
\end{equation}
For $m \geq 0$, there is an explicit $m$-equivariant static solution to the Bogomol'nyi equation, the Jackiw--Pi vortex. This solution is unique up to the symmetries of the equation (\cite{li2020threshold}):
\begin{equation}\label{1.5}
Q(r) = \sqrt{8} (m + 1) \frac{r^{m}}{1 + r^{2m + 2}}, \qquad m \geq 0.
\end{equation}
\end{definition}

Equation $(\ref{1.1})$ has the pseudoconformal transform $\mathcal C$,
\begin{equation}\label{1.6}
[\mathcal C u](t, r) = \frac{1}{|t|} u(-\frac{1}{t}, \frac{r}{|t|}) e^{i r^{2}/4t}, \qquad \forall t \neq 0.
\end{equation}
Since the soliton solution is non-scattering, applying the pseudoconformal transform to $Q$ gives an explicit, finite-time blowup solution,
\begin{equation}\label{1.7}
S(t, r) = \frac{1}{|t|} Q(\frac{r}{|t|}) e^{-i \frac{r^{2}}{4t}}, \qquad t < 0.
\end{equation}

It is conjectured that any blowup solution must contain either $(\ref{1.5})$ or $(\ref{1.7})$. Indeed, \cite{liu2016global} proved global well-posedness and scattering for $(\ref{1.1})$ with initial data with mass below the mass of the ground state,
\begin{equation}\label{1.8}
\| u_{0} \|_{L^{2}} < \| Q \|_{L^{2}}.
\end{equation}
\begin{theorem}\label{t1.2}
Let $m \in \mathbb{Z}_{+}$. Let $\phi_{0} \in L_{m}^{2}$ with $\| \phi_{0} \|_{L_{m}^{2}}$ and
\begin{equation}\label{1.9}
\| u_{0} \|_{L^{2}}^{2} < 8 \pi (m + 1).
\end{equation}
Then $(\ref{1.1})$ is globally well-posed in $L_{m}^{2}$ and scatters both forward and backward in time.
\end{theorem}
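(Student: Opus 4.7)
The plan is to implement the concentration--compactness/rigidity scheme of Kenig--Merle, adapted to the equivariant CSS equation and in particular to the mass-critical setting in dimension two.

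First I would set up the local theory. Since (\ref{1.1}) is a nonlinear Schr\"odinger equation with a cubic nonlinearity and a nonlocal gauge correction, the starting point is Strichartz estimates for the free Schr\"odinger equation on $\mathbb{R}^{2}$, restricted to the $m$-equivariant class. The gauge potentials $A_{\theta}[u]$ and $A_{t}[u]$ are given by explicit radial integrals of $|u|^{2}$; pointwise bounds such as $|A_{\theta}[u]| \ls \|u\|_{L^{2}}^{2}$ make the cubic and higher-order corrections amenable to a Picard iteration based on the admissible pair $(4,4)$. This yields local well-posedness in $L^{2}_{m}$ together with small-data scattering and a long-time stability/perturbation lemma.

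Next I would run a profile decomposition adapted to the full symmetry group of (\ref{1.1}): time translation, phase rotation, $L^{2}$ scaling $u(t,r)\mapsto \lambda u(\lambda^{2}t,\lambda r)$, and the pseudoconformal transform $\mathcal{C}$ in (\ref{1.6}). Assuming the conclusion fails, there is a threshold mass $M_{*}\le 8\pi(m+1)$ below which scattering holds and at which it fails; a standard manipulation of the profile decomposition, together with the small-data theory and the stability lemma, produces a minimal counterexample $u_{*}$ whose orbit modulo the symmetry group is precompact in $L^{2}_{m}$.

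The heart of the argument is the rigidity step. Here I would exploit the self-dual factorization (\ref{1.4.3}), $E[u]=\tfrac{1}{2}\|D_{u}u\|_{L^{2}}^{2}\ge 0$, together with the variational characterization of the Jackiw--Pi vortex $Q$: up to the symmetries of (\ref{1.1}), $Q$ is the unique nontrivial $m$-equivariant zero-energy state, with $\|Q\|_{L^{2}}^{2}=8\pi(m+1)$. Combining the virial/pseudoconformal identity
\[
\frac{d^{2}}{dt^{2}}\int r^{2}|u|^{2} = 16\,E[u]
\]
with the almost-periodicity of $u_{*}$, one obtains the following dichotomy: either $u_{*}$ blows up in finite time, in which case precompactness of the orbit forces concentration of at least $\|Q\|_{L^{2}}^{2}$ of mass at the blow-up point, or $u_{*}$ is global, in which case the positivity of $E[u_{*}]$ together with compactness modulo symmetries forces a stationary-type limiting profile, again of mass $\ge \|Q\|_{L^{2}}^{2}$. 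Both alternatives contradict $M_{*}<8\pi(m+1)$.

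The main obstacle will be the rigidity step, and within it the handling of the pseudoconformal symmetry: the symmetry group is noncompact in a way that differs from the standard mass-critical NLS, because $\mathcal{C}$ exchanges the static soliton $Q$ with the finite-time blow-up profile (\ref{1.7}). Ruling out the pseudoconformal scenario for the minimal counterexample requires the sharp $L^{2}$ Gagliardo--Nirenberg inequality in the equivariant class and a careful uniqueness argument identifying any limiting zero-energy profile with $Q$ itself. Once this sharp variational structure is in place, the rigidity argument closes and Theorem \ref{t1.2} follows.
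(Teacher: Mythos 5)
The paper does not prove Theorem \ref{t1.2} at all: it simply cites Theorem 1.3 of \cite{liu2016global}, so there is no internal argument to compare against. Your proposal is, in outline, a reconstruction of the strategy actually used in that reference (Strichartz-based local theory in $L^{2}_{m}$, a linear profile decomposition, extraction of a minimal almost-periodic counterexample, and a rigidity step exploiting the self-dual identity $E[u]=\tfrac12\|D_{u}u\|_{L^{2}}^{2}\ge 0$ together with the variational characterization of $Q$ and a virial identity). So the roadmap is the right one.

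There is, however, a genuine gap in your rigidity step as written. The theorem is posed for data in $L^{2}_{m}$ only, and the minimal counterexample produced by concentration compactness is merely almost periodic in $L^{2}_{m}$; for such a solution the energy $E[u_{*}]$ and the quantity $\int r^{2}|u_{*}|^{2}$ need not be finite, so you cannot directly invoke positivity and conservation of $E$, nor the virial/pseudoconformal identity. In every mass-critical scattering argument of this type the substantial work is precisely to upgrade the regularity and spatial decay of the minimal counterexample (ruling out the self-similar and cascade scenarios, proving the almost-periodic solution lies in $H^{1}$ or better with uniform bounds) \emph{before} any energy or virial argument becomes available. Your proposal skips this step entirely and treats $E[u_{*}]$ as a finite conserved quantity from the outset; as stated, the dichotomy you draw (finite-time blowup versus a stationary limiting profile, each forcing mass at least $\|Q\|_{L^{2}}^{2}$) does not follow. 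A secondary, more minor issue: the pseudoconformal transform $\mathcal{C}$ is not part of the noncompact symmetry group one quotients by in the linear profile decomposition (that group is generated by scaling, phase, and time translation in the equivariant class); $\mathcal{C}$ enters only later, in identifying or transforming the enemy scenarios, so building it into the profile decomposition is neither standard nor necessary and would complicate the stability analysis.
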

\begin{proof}
See Theorem $1.3$ of \cite{liu2016global}.
\end{proof}

Making a $u$-substitution,
\begin{equation}\label{4.9}
\| Q \|_{L^{2}}^{2} = 16 \pi (m + 1)^{2} \int_{0}^{\infty} \frac{r^{2m + 1}}{(1 + r^{2m + 2})^{2}} dr = 8 \pi (m + 1) \int_{0}^{\infty} \frac{du}{(1 + u)^{2}} = 8 \pi (m + 1).
\end{equation}
\begin{remark}
A function $u_{0} \in L_{m}^{2}$ if $u_{0} \in L^{2}$ and $u_{0}$ has the form $(\ref{1.2.1})$. A function $u_{0} \in H_{m}^{1}$ if $u_{0} \in H^{1}$ and has the form $(\ref{1.2.1})$.
\end{remark}

More recently, \cite{kim2020blow} and \cite{kim2022soliton} proved a decomposition for finite time blowup solutions to $(\ref{1.1})$ with finite energy and initial data in a weighted Sobolev space.
\begin{theorem}\label{t1.3}
If $m \in \mathbb{Z}_{+}$ and $u$ is a $ H_{m}^{1}$-solution to $(\ref{1.1})$ that blows up forward in time at $T < +\infty$, then $u(t)$ admits the decomposition
\begin{equation}\label{1.10}
u(t, \cdot) - Q_{\lambda(t), \gamma(t)} \rightarrow z^{\ast}, \qquad \text{in} \qquad L^{2}, \qquad \text{as} \qquad t \nearrow T.
\end{equation}

Moreover, using the pseudoconformal transformation in $(\ref{1.6})$, it is possible to obtain a similar decomposition for a solution that exists globally forward in time, but fails to scatter forward in time, for initial data that also lies in a weighted $L^{2}$-space.
\end{theorem}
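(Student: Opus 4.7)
The plan is to adapt the modulation analysis developed by Merle, Rapha\"el, and collaborators for the mass-critical NLS to the self-dual Chern--Simons--Schr\"odinger setting. First, I would set up a modulation decomposition
$$u(t,r) = e^{i \gamma(t)} \lambda(t)^{-1} Q\bigl(r/\lambda(t)\bigr) + \epsilon(t,r),$$
where $\lambda(t) > 0$ and $\gamma(t) \in \mathbb{R}$ are chosen so that $\epsilon$ is orthogonal to the tangent directions of the soliton manifold. The $m$-equivariance ansatz (\ref{1.2.1}) reduces the symmetry group to scaling and phase, so the modulation is two-dimensional. Because Theorem \ref{t1.2} rules out blowup below threshold mass, any blowup solution satisfies $\|u_0\|_{L^2} \geq \|Q\|_{L^2}$, and after subtracting the mass carried by $\epsilon$ one expects $\epsilon$ to be small in $L^2$ in a suitable regime near $T$.

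Next, I would exploit the self-dual energy identity (\ref{1.4.3}), $E[u] = \tfrac{1}{2}\|D_u u\|_{L^2}^{2} \geq 0$, together with mass conservation (\ref{1.4.2}), to obtain tight a priori control on $\epsilon$ whenever the solution is close to the soliton family. Using coercivity of the linearized Bogomol'nyi operator on the orthogonal complement of its generalized kernel, one bounds $\|\epsilon\|_{\dot H^1}$ in terms of the modulation parameters. The dynamical heart of the argument is then to derive modulation ODEs for $\lambda(t), \gamma(t)$ by projecting (\ref{1.1}) against the generalized null directions of the linearization around $Q$, and to extract the concentration rate $\lambda(t) \to 0$ as $t \nearrow T$; the pseudoconformal symmetry (\ref{1.6}) suggests the canonical rate $\lambda(t) \sim (T-t)$, realized by the explicit solution $S$ in (\ref{1.7}).

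The main obstacle is proving strong $L^2$ convergence of the remainder to a fixed profile $z^{\ast}$. The argument proceeds by a Cauchy-in-$L^2$ criterion: using the equation for $\epsilon$ together with the modulation estimates, one bounds $\|\epsilon(t_1) - \epsilon(t_2)\|_{L^2}$ integrably in time as $t_1, t_2 \nearrow T$. Although the time interval $[0,T)$ is finite, the nonlinear interaction between the concentrating soliton and the background tail of $\epsilon$ is delicate and requires a careful choice of orthogonality conditions to avoid logarithmic losses. This is where monotonicity formulae (virial-type or local-smoothing identities tailored to the covariant derivative $D_u$) typically enter, to close a bootstrap on $\|\epsilon\|_{L^2}$ and establish the existence of the asymptotic profile $z^{\ast}$.

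Finally, for the second statement of the theorem, I would apply the pseudoconformal transform $\mathcal{C}$ in (\ref{1.6}), which conjugates a globally defined non-scattering solution into one blowing up at $t = 0$. Applying the already-established decomposition to $\mathcal{C} u$ and undoing the transform yields the decomposition for the global non-scattering case; the weighted $L^2$ assumption on the initial data arises naturally here, because $\mathcal{C}$ exchanges $L^2$ regularity with spatial decay, so what was merely $L^2$ control of $z^{\ast}$ in the blowup case translates into a weighted $L^2$ hypothesis in the global case.
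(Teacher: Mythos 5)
This theorem is not proved in the paper at all: the paper's ``proof'' is a citation to \cite{kim2022soliton}, so there is no internal argument to compare against. Your outline does correctly identify the structural ingredients that the cited work (and Section 2 of this paper, via Proposition \ref{p2.3}, Proposition \ref{p2.4}, and Lemma \ref{l2.4.1}) actually uses: a two-parameter modulation decomposition adapted to the $m$-equivariant symmetry group, the self-dual factorization $E[u]=\tfrac12\|D_u u\|_{L^2}^2\ge 0$, and coercivity of the linearized Bogomol'nyi operator $L_Q$ under orthogonality conditions. Your treatment of the second assertion --- conjugating a global non-scattering solution by the pseudoconformal transform to reduce to the finite-time case, with the weighted $L^2$ hypothesis appearing because $\mathcal C$ trades decay for regularity --- is also the right mechanism and matches how the statement is phrased.

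However, the proposal has a genuine gap: the two steps that constitute essentially all of the difficulty in \cite{kim2022soliton} are deferred rather than carried out. First, you assert that one ``extracts the concentration rate $\lambda(t)\to 0$'' from modulation ODEs, but nothing in your sketch rules out $\lambda(t)$ oscillating or failing to converge; establishing monotone concentration is a nontrivial no-return argument, not a consequence of the ODEs alone. Moreover the claim that the canonical rate is $\lambda(t)\sim(T-t)$ is not part of the theorem and is not universal --- the decomposition \eqref{1.10} is rate-free. Second, the strong $L^2$ convergence of the residual to a fixed profile $z^{\ast}$ is exactly where you write that monotonicity formulae ``typically enter''; this is the heart of the proof and is left unsupplied. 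Note also that your expectation that $\epsilon$ be ``small in $L^2$'' is misleading: the radiation $z^{\ast}$ may carry arbitrary mass (only $\|u_0\|_{L^2}^2-\|Q\|_{L^2}^2$ is determined), and the smallness available from the energy coercivity is in $\dot{\mathcal H}_m^1$ after renormalization, as in \eqref{2.13}, not in $L^2$. As written, the proposal is a plausible research program consistent with the cited proof, but it is not a proof.
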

\begin{proof}
See \cite{kim2022soliton}.
\end{proof}

In this paper, we prove a Liouville theorem for solutions to $(\ref{1.1})$ that are global in at least one time direction.
\begin{theorem}[Liouville theorem]\label{t1.4}
Suppose $u_{0} \in H_{m}^{1}$ is an initial data for $(\ref{1.1})$ that has a solution on the maximal interval of existence $I$. Furthermore, suppose that $I = (-\infty, t_{0})$, where $t_{0}$ could be $+\infty$, or $(t_{0}, \infty)$, where $t_{0}$ could be $-\infty$. Also suppose that for any $\eta > 0$, there exists $R(\eta) < \infty$ such that
\begin{equation}\label{1.11}
\sup_{t \in I} \int_{|x| \geq R} |u(t, x)|^{2} dx < \eta,
\end{equation}
where $I$ is the interval of existence for a solution to $(\ref{1.1})$. Then $u$ is equal to the soliton solution $(\ref{1.5})$, up to the scaling symmetry,
\begin{equation}\label{1.12}
u_{\lambda}(t, r) = \frac{1}{\lambda} u(\frac{t}{\lambda^{2}}, \frac{x}{\lambda}), \qquad \lambda > 0,
\end{equation}
and multiplication by $e^{i \gamma}$ for some $\gamma \in \mathbb{R}$.
\end{theorem}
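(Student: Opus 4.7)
The plan is to exploit the self-dual identity $E[u] = \tfrac{1}{2}\|D_u u\|_{L^2}^2$, which is nonnegative and vanishes precisely on the Bogomol'nyi orbit of $Q$ (Definition \ref{d1.1}). It therefore suffices to show that hypothesis (\ref{1.11}), together with one-sided global existence, forces $E[u_0] = 0$. The role of (\ref{1.11}) is to play an almost-periodicity condition in $L^2_m$, and the role of the infinite time window on one side of $I$ is to drive a virial/monotonicity argument to rigidity.

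The first step is to upgrade (\ref{1.11}) to $L^2_m$-precompactness of the orbit $\{u(t) : t \in I\}$. Condition (\ref{1.11}) gives spatial tightness, conservation of $M[u]$ gives uniform boundedness, and $H^1_m$-persistence along $I$ (which follows from conservation of $E[u]$ together with the uniform bound on the gauge potentials $A_\theta[u]$, $A_t[u]$ coming from $M[u_0]<\infty$) gives equicontinuity. A nontrivial $L^2$-scattering solution disperses and cannot satisfy (\ref{1.11}) on an infinite half-line, so Theorem \ref{t1.2} forces $\|u_0\|_{L^2}^2 \ge 8\pi(m+1) = \|Q\|_{L^2}^2$.

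The second step is a localized virial argument. The formal identity for $L^2$-critical CSS reads $\tfrac{d^2}{dt^2}\int r^2 |u|^2 = 16 E[u]$ (the gauge contributions cancel by a manipulation that parallels the NLS virial). In the absence of weighted initial data I would replace $r^2$ with a smooth truncation $\chi(r/R)\, r^2$ and derive $\tfrac{d^2}{dt^2}\int \chi(r/R)\, r^2 |u|^2 = 16 E[u] + \mathcal{E}_R(t)$, where the error $\mathcal{E}_R(t)$ is quadratic in $u$ restricted to $r \gs R$ together with the tail of $A_\theta[u]+m$ and $A_t[u]$. Using (\ref{1.11}) to bound the tail mass (and hence the tail of $A_\theta$ and $A_t$) uniformly in $t \in I$, one obtains $\sup_{t \in I}|\mathcal{E}_R(t)| \to 0$ as $R \to \infty$. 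Since the truncated virial itself is bounded by $R^2 M[u_0]$, integrating the identity twice over an arbitrarily long subinterval of $I$ rules out $E[u_0] > 0$. Hence $D_{u(t)} u(t) = 0$ for every $t \in I$.

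By the uniqueness statement in Definition \ref{d1.1}, $D_{u(t)}u(t) = 0$ forces $u(t,r) = e^{i\gamma(t)}\lambda(t)^{-1} Q(r/\lambda(t))$ for each $t$. Since $Q$ is itself a static solution of (\ref{1.1}), this ansatz determines the initial data up to (\ref{1.12}) and a constant phase, and uniqueness for the Cauchy problem of (\ref{1.1}) then forces $\lambda$ and $\gamma$ to be constant on $I$, giving the conclusion. The main obstacle is making the truncated virial of the second step rigorous: the nonlocal gauge potentials $A_t[u]$, $A_\theta[u]$ produce cutoff-induced cross terms that have no analog in the scalar $L^2$-critical NLS and that must be absorbed using the precise $1/r$-scaling of the gauge terms, the self-dual structure, and the tightness (\ref{1.11}). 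Executing this mass-critical CSS virial argument with only spatial tightness in hand, rather than a weighted norm, is the heart of the matter.
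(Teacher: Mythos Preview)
Your scheme has the right architecture---reduce to $E[u_0]=0$ via a truncated virial on the infinite half-line---but the claimed uniform $H^1_m$ bound (``$H^1_m$-persistence along $I$ follows from conservation of $E[u]$'') is not justified, and without it the virial step does not close. Conservation of $E[u]=\tfrac12\|D_u u\|_{L^2}^2$ controls only the nonlinear Bogomol'nyi quantity, not $\|\partial_r u\|_{L^2}$; in the non--self-dual form (\ref{1.3}) the sign-indefinite $-\tfrac14\int|u|^4$ prevents any coercivity. The paper explicitly allows $\sup_{t\in I}\|\nabla u(t)\|_{L^2}=\infty$ (Case~2 of Lemma~\ref{l2.2}) and builds machinery to cope with it. Once the uniform $H^1$ bound is dropped, your error $\mathcal E_R(t)$ contains the tail $\int_{r\gtrsim R}|u|^4$ (and annular gradient pieces), which $L^2$ tightness (\ref{1.11}) alone cannot make $o_R(1)$ uniformly in $t$; the interpolation $\int_{r\ge R}|u|^4\lesssim\|u\|_{\dot H^1}^2\int_{r\ge R}|u|^2$ is useless if $\|u\|_{\dot H^1}$ is unbounded.

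The paper closes this gap by a different mechanism. Rather than writing $\tfrac{d^2}{dt^2}V_R=16E[u]+\mathcal E_R$ and bounding $V_R$, it works with the first derivative (the Morawetz action $M_R(t)=\int\psi(r/R)\,\mathrm{Im}(\bar u\, r\partial_r u)$) and proves the nontrivial bound $|M_R(t)|\lesssim R$ uniformly in $t$ (Lemma~\ref{l2.2}). The point is that when $\|\nabla u(t)\|_{L^2}$ is large, Proposition~\ref{p2.3} gives a decomposition $u=[Q+\epsilon]_{\lambda,\gamma}$ with $\lambda\gg1$ and $\|\epsilon\|_{\dot H^1_m}$ small, and both the Morawetz action and the $L^4$ tail are then estimated using $Q$ explicitly plus the smallness of $\epsilon$ (see (\ref{2.17})--(\ref{2.22}) and (\ref{2.24.9})--(\ref{2.24.11})). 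The resulting identity is $\tfrac{d}{dt}M_R = 4E[\chi_R u]+o_R(1)$, which after time-averaging yields first $\|u\|_{L^2}=\|Q\|_{L^2}$ (Theorem~\ref{t2.1}), then a Merle-type rigidity excluding finite-time blowup (Theorem~\ref{t3.1}), and only then a refined virial on the now two-sided global solution with exact threshold mass to force $E[u]=0$ (Theorem~\ref{t4.1}). Your shortcut to $E=0$ would work if you could supply the missing $H^1$ bound, but as written that step is a genuine gap.
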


This result was inspired by the Liouville theorem of \cite{martel2000liouville}. There, \cite{martel2000liouville} proved that for a solution to the mass-critical generalized Korteweg de-Vries equation,
\begin{equation}\label{1.13}
u_{t} + u_{xxx} + \partial_{x}(u^{5}) = 0,
\end{equation}
with initial data close to the rescaled soliton in $H_{x}^{1}(\mathbb{R})$, and with $H^{1}(\mathbb{R})$ norm uniformly bounded, then the solution to $(\ref{1.13})$ must be the soliton. For the mass-critical generalized KdV equation, it is expected that multi-soliton solutions occur, which necessitates additional constraints on the size of the initial data than we have here.\medskip

Unlike the generalized KdV equation, the structure of the self-dual Chern--Simons--Schr{\"o}dinger equation is defocusing outside of a soliton.
For this reason, it is unnecessary to require a uniform bound on $\| u(t) \|_{H^{1}}$ on $I$. Also, since $u_{0}$ need not be close to the soliton, we do not assume an a priori bound on $\| u_{0} \|_{L^{2}}$. 

\begin{remark}
For a solution to $(\ref{1.1})$, $u \in H_{m}^{1, 1}$, that exists globally forward in time, then either $u(t)$ scatters forward in time, or $u(t)$ admits the decomposition
\begin{equation}\label{1.13.1}
u(t, \cdot) - Q_{\lambda(t), \gamma(t)} - e^{it \Delta^{(-m - 2)}} u^{\ast} \rightarrow 0, \qquad \text{in} \qquad L^{2}, \qquad \text{as} \qquad t \rightarrow \infty.
\end{equation}
Here, $e^{it \Delta^{(-m - 2)}} u^{\ast}$ is the solution to the free, $(-m - 2)$--equivariant Schr{\"o}dinger flow,
\begin{equation}\label{1.13.2}
i \partial_{t} u + \partial_{r}^{2} u + \frac{1}{r} \partial_{r} u - \frac{(m + 2)^{2}}{r^{2}} u = 0.
\end{equation}
The space $H_{m}^{1, 1}$ is the space of $m$-equivariant functions, $(\ref{1.2.1})$, that lie in $H^{1}$ and the weighted $L^{2}$-space, $\| |x| u \|_{L^{2}} < \infty$.\medskip

If $(\ref{1.13.1})$ could be proved for any $u_{0} \in  H_{m}^{1}$, then Theorem $\ref{t1.4}$ would likely follow fairly easily, since $(\ref{1.13.1})$ would at least imply that $u^{\ast} = 0$, and thus $\| u \|_{L^{2}} = \| Q \|_{L^{2}}$. This is due to the fact that a scattering solution, or a solution with a scattering piece could not satisfy $(\ref{1.11})$.
\end{remark}

The proof of Theorem $\ref{t1.4}$ may be broken down into three steps. First, using a virial identity combined with $(\ref{1.11})$, we prove that any solution to $(\ref{1.1})$ that satisfies $(\ref{1.11})$ must have the mass of the soliton,
\begin{equation}\label{1.14}
\| u(t) \|_{L^{2}} = \| Q \|_{L^{2}}.
\end{equation}

Next, using an argument analogous to the argument in \cite{merle1993determination}, we prove that a solution to $(\ref{1.1})$ satisfying $(\ref{1.11})$ and $(\ref{1.14})$ must be global in both time directions. Indeed, any finite time blowup solution with $\| u_{0} \|_{L^{2}} = \| Q \|_{L^{2}}$ must be a rescaled version of $(\ref{1.7})$, which clearly does not satisfy $(\ref{1.11})$.

Combining $(\ref{1.11})$, $(\ref{1.14})$, $u_{0} \in H^{1}$, and the virial identity shows that $u(t)$ is the soliton.

\section{Mass above the soliton}
In this section, we prove that if $(\ref{1.11})$, then $u$ has the same mass as the soliton.

\begin{theorem}\label{t2.1}
Suppose $u_{0} \in H^{1}$ is an initial data for $(\ref{1.1})$ that has a solution on the maximal interval of existence $I$. Also suppose that for any $\eta > 0$, there exists $R(\eta) < \infty$ such that
\begin{equation}\label{2.1}
\sup_{t \in I} \int_{|x| \geq R} |u(t, x)|^{2} dx < \eta,
\end{equation}
where $I$ is the interval of existence for a solution to $(\ref{1.1})$. Then $\| u \|_{L^{2}} = \| Q \|_{L^{2}}$, where $Q$ is the soliton, $(\ref{1.5})$.
\end{theorem}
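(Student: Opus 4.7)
The plan is to rule out both $\|u_0\|_{L^{2}} < \|Q\|_{L^{2}}$ and $\|u_0\|_{L^{2}} > \|Q\|_{L^{2}}$, assuming throughout that $u_0 \not\equiv 0$. For the first inequality, Theorem \ref{t1.2} produces a global solution that scatters in both time directions, so $u(t)$ is asymptotic in $L^{2}$ to a solution of the linear equivariant Schr\"odinger flow. The 2D dispersive estimate $\|e^{it\Delta}\phi\|_{L^{\infty}} \lesssim |t|^{-1}\|\phi\|_{L^{1}}$, together with a density argument, shows that $\int_{|x|<R}|e^{it\Delta}\phi|^{2}\,dx \to 0$ as $|t|\to\infty$ for every fixed ball and every $\phi \in L^{2}$. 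Applied to the scattering profile, this forces $\int_{|x|\geq R}|u(t)|^{2}\,dx \to \|u_0\|_{L^{2}}^{2}$, contradicting $(\ref{2.1})$ once $\eta<\|u_0\|_{L^{2}}^{2}$.

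For the reverse inequality, the core tool is the self-dual representation $E[u] = \tfrac{1}{2}\int |D_{u} u|^{2}$. Because $D_{u} u = 0$ characterizes (rescaled) Jackiw--Pi vortices, and all such vortices carry mass exactly $\|Q\|_{L^{2}}$, any $u_0$ with $\|u_0\|_{L^{2}} > \|Q\|_{L^{2}}$ necessarily satisfies $E[u] > 0$ strictly. The natural next step is the virial identity for $(\ref{1.1})$, which thanks to self-duality takes the clean form $\tfrac{d^{2}}{dt^{2}}\int |x|^{2}|u|^{2}\,dx = 16 E[u]$; strict positivity on the right-hand side would turn the second moment into a strictly convex function of $t$, and I would contradict $(\ref{2.1})$ by observing that a strictly convex, uniformly bounded function on the unbounded interval $I$ cannot exist.

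Since $u_0 \in H_{m}^{1}$ need not have a finite second moment, I would execute this with a truncated weight $\psi_{R}(r) = R^{2}\psi(r/R)$ where $\psi$ is smooth, nonnegative, convex, equal to $s^{2}$ on $[0,1]$, and has bounded derivatives up to order four. The truncated virial identity takes the form
\[
\frac{d^{2}}{dt^{2}}\int \psi_{R}(|x|)|u|^{2}\,dx \;=\; 16\, E[u] + \mathcal{E}_{R}(t),
\]
where $\mathcal{E}_{R}(t)$ is supported in $\{|x|\geq R\}$ and consists of contributions from $\int_{|x|\geq R}|\partial_{r}u|^{2}$, $\int_{|x|\geq R}|u|^{2}/r^{2}$, $\int_{|x|\geq R}|u|^{4}$, and nonlocal terms involving $A_{t}[u]$ and $A_{\theta}[u]$. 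Choosing $R$ so large that $(\ref{2.1})$ combined with the energy bound $\int |D_{u}u|^{2} = 2 E[u]$ forces $|\mathcal{E}_{R}(t)| \leq 8 E[u]$ uniformly in $t \in I$, the localized second moment has second derivative $\geq 8 E[u] > 0$ on all of $I$, while its pointwise value is bounded above by $\|\psi_{R}\|_{L^{\infty}} \|u_0\|_{L^{2}}^{2}$, which is impossible on a half-line.

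The main obstacle I foresee is controlling the gradient-type tail $\int_{|x|\geq R}|D_{u}u|^{2}$ that appears inside $\mathcal{E}_{R}$ uniformly in $t$: the hypothesis $(\ref{2.1})$ directly supplies smallness of the $L^{2}$ tail but not of the covariant-derivative tail. The plan is to run a localized Bogomol'nyi computation, using the pointwise bound $|A_{\theta}[u](r)| \leq \tfrac{1}{4\pi}\|u\|_{L^{2}}^{2}$ and the monotone structure $A_{\theta}[u](r) = -\tfrac{1}{2}\int_{0}^{r}|u|^{2}r'\,dr'$ to tame the nonlocal contributions, combined with a Hardy-type estimate adapted to the $m$-equivariant setting to transfer spatial concentration of $|u|^{2}$ to spatial concentration of $|D_{u}u|^{2}$. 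Once that transfer is secured, the localized virial gives the desired contradiction and the theorem follows.
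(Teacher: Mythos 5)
Your first case (ruling out $\| u_{0} \|_{L^{2}} < \| Q \|_{L^{2}}$ via Theorem \ref{t1.2} and the dispersive decay of the linear flow) is sound and is consistent with the paper's own remark that a scattering solution cannot satisfy $(\ref{1.11})$. The gap is in the second case, and it is the one you yourself flag: your truncated virial identity produces an error term $\mathcal{E}_{R}(t)$ containing $\int_{|x| \geq R} |\partial_{r} u|^{2}$ (equivalently $\int_{|x|\geq R}|D_{u}u|^{2}$), and you need this to be small, uniformly in $t \in I$, for some fixed large $R$. The hypothesis $(\ref{2.1})$ controls only the mass tail, and there is no estimate transferring $L^{2}$-concentration to $\dot{H}^{1}$-concentration: Hardy's inequality goes in the opposite direction (gradient controls weighted $L^{2}$), and a solution can carry an $O(1)$ amount of kinetic energy at radius $r \sim R_{n} \to \infty$ in a high-frequency, small-amplitude tail without violating tightness of the mass. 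Worse, in the regime where $\| u(t) \|_{\dot{H}^{1}}$ is unbounded on $I$ (which cannot be excluded a priori), even boundedness of the gradient tail is unclear without the modulation analysis. So the step ``choose $R$ so that $|\mathcal{E}_{R}(t)| \leq 8 E[u]$ uniformly in $t$'' is not justified by the tools you propose, and the convexity contradiction does not close.

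The paper's proof is engineered precisely to avoid this obstruction. Rather than writing the localized virial derivative as $16 E[u] + \mathcal{E}_{R}$, it is organized (see $(\ref{2.24})$--$(\ref{2.25})$) as $4E[\chi_{R} u] + o_{R}(1)$, where the main term is the energy of the \emph{truncated} function --- nonnegative by self-duality --- and the errors involve only mass tails and $|u|^{4}$ tails; the latter are handled by interpolation when $\| u \|_{\dot H^{1}} \lesssim 1$ and by the soliton decomposition (Propositions \ref{p2.3}, \ref{p2.4}) when $\| u \|_{\dot H^{1}} \gg 1$. No control of the gradient tail is ever needed. One then integrates in $t$, uses the uniform bound of Lemma \ref{l2.2} on the truncated virial quantity, takes $R = T^{1/3}$, and extracts times $t_{n}'$ with $E[\chi_{R_{n}} u(t_{n}')] \to 0$; the variational characterization of $Q$ together with tightness then forces $\| u \|_{L^{2}} = \| Q \|_{L^{2}}$. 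Note also that this integrated argument yields the conclusion directly, without splitting into sub- and super-threshold cases; if your convexity argument did work it would already cover the subcritical nonzero case too (since $E[u_{0}] > 0$ there as well), which is a sign that the case split is not where the real difficulty lies. To repair your proof you would either need to establish uniform non-escape of kinetic energy (essentially as strong as the paper's modulation analysis) or restructure the virial computation around $E[\chi_{R} u]$ as the paper does.
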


\begin{proof}
We prove this using the virial identity
\begin{equation}\label{2.2}
\partial_{t} \int Im(\bar{u} \cdot r \partial_{r} u) = 4 E[u].
\end{equation}

\begin{lemma}\label{l2.2}
For any solution $u$, $0 < R < \infty$,
\begin{equation}\label{2.3}
\sup_{t \in I} \int \psi(\frac{r}{R}) Im[\bar{u} \cdot r \partial_{r} u] \lesssim_{E[u], M[u]} R.
\end{equation}
Here $\psi(r) \in C^{\infty}(\mathbb{R}^{2})$ is a radially symmetric function, $\psi(r) = 1$ for $r \leq 1$, $\psi(r) = \frac{3}{2r}$ for $r \geq 2$. Moreover,
\begin{equation}
\partial_{r} (\psi(r) r) = \phi(r),
\end{equation}
where $\phi(r)$ is a positive, smooth function, $\phi(r) = 1$ for $r \leq 1$, $\phi(r)$ supported on $r \leq 2$, and $\phi(r) = \chi(r)^{2}$ for some $\chi \in C_{0}^{\infty}(\mathbb{R}^{2})$.
\end{lemma}
\begin{proof}[Proof of Lemma]
Consider two cases separately, when $\| u(t) \|_{H^{1}}$ is uniformly bounded, and the case when $\| u(t) \|_{H^{1}}$ is not uniformly bounded.\medskip

\noindent \emph{Case 1:}
\begin{equation}\label{2.4}
\sup_{t \in I} \| \nabla u(t) \|_{L^{2}} < \infty.
\end{equation}
In this case, $I = \mathbb{R}$. Now then,
\begin{equation}\label{2.5}
\int \psi(\frac{r}{R}) Im[\bar{u} \cdot r \partial_{r} u] \lesssim \sup_{t \in I} \| \nabla u \|_{L^{2}} \| \psi(\frac{r}{R}) ru \|_{L^{2}} \lesssim_{M[u]} R \sup_{t \in I} \| \nabla u(t) \|_{L^{2}(\mathbb{R}^{2})} \lesssim R.
\end{equation}

\noindent \emph{Case 2:} Since $\| \nabla u(t) \|_{L^{2}}$ is continuous in time, if $\sup_{t \in I} \| \nabla u(t) \|_{L^{2}} = \infty$, then there exists a sequence $t_{n}$ such that
\begin{equation}\label{2.6}
\| \nabla u(t_{n}) \|_{L^{2}} = n.
\end{equation}
Set
\begin{equation}\label{2.7}
\lambda(t_{n}) = \frac{\| \nabla u(t_{n}) \|_{L^{2}}}{\| \nabla Q \|_{L^{2}}}.
\end{equation}
Plugging $\lambda(t_{n})$ into $(\ref{1.12})$, let
\begin{equation}\label{2.8}
v(t_{n}, x) = \frac{1}{\lambda(t_{n})} u(t_{n}, \frac{x}{\lambda(t_{n})}).
\end{equation}
By direct computation,
\begin{equation}\label{2.9}
E[v(t_{n}, x)] = \frac{1}{\lambda(t_{n})^{2}} E[u(t_{n})] = \frac{1}{\lambda(t_{n})^{2}} E[u_{0}].
\end{equation}

Now, recall Proposition $4.1$ from \cite{kim2022soliton}.
\begin{proposition}[Decomposition]\label{p2.3}
Let $Z_{1}, Z_{2} \in C_{c, m}^{\infty}$ be profiles that satisfy
\begin{equation}\label{2.10}
det \begin{pmatrix} (\Lambda Q, Z_{1})_{r} & (i Q, Z_{1})_{r} \\ (\Lambda Q, Z_{2})_{r} & (i Q, Z_{2})_{r} \end{pmatrix} \neq 0.
\end{equation}
Here, $\Lambda$ is the operator $r \partial_{r} + 1$. Then for any $M < \infty$, there exists $0 < \alpha^{\ast} \ll 1$ such that the following properties hold for all $u \in H_{m}^{1}$ with $\| u \|_{L^{2}} \leq M$ satisfying the small energy condition $\sqrt{E[u]} \leq \alpha^{\ast} \| u \|_{H_{m}^{1}}$.\medskip

There exists a unique $(\lambda, \gamma) \in \mathbb{R}_{+} \times \mathbb{R} / 2 \pi \mathbb{Z}$ such that $\epsilon \in H_{m}^{1}$, defined by the relation
\begin{equation}\label{2.11}
u = [Q + \epsilon]_{\lambda, \gamma},
\end{equation}
satisfies the orthogonality conditions,
\begin{equation}\label{2.12}
(\epsilon, Z_{1})_{r} = (\epsilon, Z_{2})_{r} = 0,
\end{equation}
and smallness
\begin{equation}\label{2.13}
\| \epsilon \|_{\dot{\mathcal H}_{m}^{1}} \sim_{M} \lambda \sqrt{E[u]}.
\end{equation}
\end{proposition}
\begin{remark}
The space $\mathcal H_{m}^{1}$ is a function space adapted to the linear coercivity of the energy. When $m \geq 1$, as is true in this paper, the spaces $\mathcal H_{m}^{1}$ and $H_{m}^{1}$ are equivalent.
\end{remark}

\begin{proof}
The proof in  \cite{kim2022soliton} relies on the uniqueness of the soliton as a function with zero energy, the nonlinear coercivity of energy in  \cite{kim2022soliton}, and the implicit function theorem.

\begin{proposition}[Nonlinear coercivity of energy]\label{p2.4}
For any $M > 0$, there exists $\eta > 0$ such that the nonlinear coercivity
\begin{equation}\label{2.13.1}
E[Q + \epsilon] \sim_{M} \| \epsilon \|_{\dot{\mathcal H}_{m}^{1}}^{2},
\end{equation}
holds for any $\epsilon \in H_{m}^{1}$ with $\| \epsilon \|_{L^{2}} \leq M$ satisfying the orthogonality conditions $(\ref{2.12})$ and smallness $\| \epsilon \|_{\dot{\mathcal H}_{m}^{1}} \leq \eta$.
\end{proposition}

\begin{proof}[Proof of Proposition $\ref{p2.4}$]
We follow the argument in \cite{kim2022soliton}. Observe that, by $(\ref{1.4.4})$,
\begin{equation}\label{2.13.2}
2 E[Q + \epsilon] = \| D_{Q + \epsilon}(Q + \epsilon) \|_{L^{2}}^{2} = \| \partial_{r} (Q + \epsilon) - \frac{m + A_{\theta}[Q + \epsilon]}{r} (Q + \epsilon) \|_{L^{2}}^{2}.
\end{equation}
Now then, since $D_{Q} Q = 0$,
\begin{equation}\label{2.13.3}
\aligned
\partial_{r} (Q + \epsilon) - \frac{m + A_{\theta}[Q + \epsilon]}{r} (Q + \epsilon) = \partial_{r} Q - \frac{m + A_{\theta}[Q]}{r} Q + \partial_{r} \epsilon - \frac{m + A_{\theta}[Q]}{r} \epsilon \\ - \frac{2 A_{\theta}[Q, \epsilon]}{r} Q - \frac{A_{\theta}[\epsilon]}{r} Q
- \frac{2 A_{\theta}[Q, \epsilon]}{r} \epsilon - \frac{A_{\theta}[\epsilon]}{r} \epsilon \\
= D_{Q} \epsilon - \frac{2 A_{\theta}[Q, \epsilon]}{r} Q - \frac{A_{\theta}[\epsilon]}{r} Q
- \frac{2 A_{\theta}[Q, \epsilon]}{r} \epsilon - \frac{A_{\theta}[\epsilon]}{r} \epsilon = L_{Q} \epsilon - \frac{A_{\theta}[\epsilon]}{r} Q - \frac{2 A_{\theta}[Q, \epsilon]}{r} \epsilon - \frac{A_{\theta}[\epsilon]}{r} \epsilon,
\endaligned
\end{equation}
Here,
\begin{equation}\label{2.13.4}
A_{\theta}[\psi_{1}, \psi_{2}] = -\frac{1}{2} \int_{0}^{r} Re(\bar{\psi_{1}} \psi_{2}) r' dr',
\end{equation}
and
\begin{equation}\label{2.13.5}
L_{Q} \epsilon = D_{Q} \epsilon - \frac{2 A_{\theta}[Q, \epsilon]}{r} Q.
\end{equation}
Using the coercivity of $L_{Q}$ proved in \cite{kim2020blow}, \cite{kim2019pseudoconformal},
\begin{lemma}[Coercivity of $L_{Q}$]\label{l2.4.1}
Let $m \geq 0$. Let $\mathcal Z_{1}$, $\mathcal Z_{2} \in C_{c, m}^{\infty}$ satisfy $(\ref{2.10})$. Then,
\begin{equation}\label{2.13.6}
\| L_{Q} f \|_{L^{2}} \sim \| f \|_{\dot{\mathcal H}_{m}^{1}}, \qquad \forall f \in \dot{\mathcal H}_{m}^{1} \qquad \text{with} \qquad (f, \mathcal Z_{1}) = (f, \mathcal Z_{2}) = 0.
\end{equation}
\end{lemma}

Now then, by Hardy's inequality,
\begin{equation}\label{2.13.7}
\| \frac{2}{r} A_{\theta}[Q, \epsilon] \epsilon \|_{L^{2}} \lesssim (\int_{0}^{\infty} Q \langle r \rangle |\epsilon| dr) \cdot \| \frac{1}{\langle r \rangle} \epsilon \|_{L^{2}} \lesssim \| \epsilon \|_{\dot{\mathcal H}_{m}^{1}}^{3/2} \| \epsilon \|_{L^{2}}^{1/2} \lesssim_{M} \| \epsilon \|_{\dot{\mathcal H}_{m}^{1}}^{3/2}.
\end{equation}
Also, by Hardy's inequality,
\begin{equation}\label{2.13.8}
\| \frac{1}{r} A_{\theta}[\epsilon] Q \|_{L^{2}} \lesssim (\int_{0}^{\infty} \frac{1}{\langle r \rangle^{1/2}} |\epsilon|^{2} dr) \cdot \| \langle r \rangle^{1/2} Q \|_{L^{2}} \lesssim \| \epsilon \|_{L^{2}}^{1/2} \| \epsilon \|_{\dot{\mathcal H}_{m}^{1}}^{3/2} \lesssim_{M} \| \epsilon \|_{\dot{\mathcal H}_{m}^{1}}^{3/2}.
\end{equation}
Therefore, we have proved
\begin{equation}\label{2.13.9}
2 E[Q + \epsilon] = \| L_{Q} \epsilon - \frac{A_{\theta}[\epsilon]}{r} \epsilon \|_{L^{2}}^{2} + O_{M}(\| \epsilon \|_{\dot{H}_{m}^{1}}^{3}).
\end{equation}
Now decompose $\epsilon = \chi_{R} \epsilon + (1 - \chi_{R}) \epsilon$, where $\chi_{R}(r) = \chi(\frac{r}{R})$ is the function defined in Lemma $\ref{l2.2}$. Then decompose
\begin{equation}\label{2.13.10}
L_{Q} \epsilon - \frac{A_{\theta}[\epsilon]}{r} \epsilon = L_{Q} \epsilon - \frac{A_{\theta}[\epsilon]}{r} (1 - \chi_{R}) \epsilon - \frac{A_{\theta}[\epsilon]}{r} \chi_{R} \epsilon.
\end{equation}
By direct computation,
\begin{equation}\label{2.13.11}
\| \frac{A_{\theta}[\epsilon]}{r} \chi_{R} \epsilon \|_{L^{2}} \lesssim (\int_{0}^{R} |\epsilon|^{2} r dr) \| \frac{\epsilon}{r} \|_{L^{2}} \lesssim R \| \epsilon \|_{L^{2}} \| \epsilon \|_{\dot{\mathcal H}_{m}^{1}}^{2} \lesssim RM || \epsilon \|_{\dot{\mathcal H}_{m}^{1}}^{2}.
\end{equation}
Next, decompose
\begin{equation}\label{2.13.12}
L_{Q} \epsilon = D_{Q} (\chi_{R} \epsilon) + D_{Q}((1 - \chi_{R}) \epsilon) - \frac{2 A_{\theta}[Q, \chi_{R} \epsilon]}{r} Q - \frac{2 A_{\theta}[Q, (1 - \chi_{R}) \epsilon]}{r} Q.
\end{equation}
Using the decay of $Q$,
\begin{equation}\label{2.13.13}
\| \frac{2 A_{\theta}[Q, (1 - \chi_{R}) \epsilon]}{r} Q \|_{L^{2}} \lesssim \frac{1}{R} \| \epsilon \|_{\dot{\mathcal H}_{m}^{1}}.
\end{equation}
Therefore,
\begin{equation}\label{2.13.14}
\| L_{Q} \epsilon - \frac{A_{\theta}[\epsilon]}{r} \epsilon \|_{L^{2}} = \| L_{Q}(\chi_{R} \epsilon) + (D_{Q} - \frac{A_{\theta}[\epsilon]}{r})(1 - \chi_{R}) \epsilon \|_{L^{2}} + \frac{1}{R} \| \epsilon \|_{\dot{\mathcal H}_{m}^{1}} + RM \| \epsilon \|_{\dot{\mathcal H}_{m}^{1}}^{2}.
\end{equation}
Decompose
\begin{equation}\label{2.13.15}
\aligned
\| L_{Q} (\chi_{R} \epsilon) + (D_{Q} - \frac{A_{\theta}[\epsilon]}{r}) (1 - \chi_{R}) \epsilon \|_{L^{2}}^{2} = \| L_{Q} (\chi_{R} \epsilon) \|_{L^{2}}^{2} \\ + \| (D_{Q} - \frac{A_{\theta}[\epsilon]}{r}) (1 - \chi_{R}) \epsilon \|_{L^{2}}^{2} + 2 \langle L_{Q} (\chi_{R} \epsilon), (D_{Q} - \frac{A_{\theta}[\epsilon]}{r}) (1 - \chi_{R}) \epsilon \rangle.
\endaligned
\end{equation}
By the support of $\chi_{R}$ and $1 - \chi_{R}$,
\begin{equation}\label{2.13.16}
 \langle L_{Q} (\chi_{R} \epsilon), (D_{Q} - \frac{A_{\theta}[\epsilon]}{r}) (1 - \chi_{R}) \epsilon \rangle \lesssim \| |\partial_{r} \epsilon| + \frac{1}{r} \epsilon \|_{L^{2}(\frac{R}{2} \leq r \leq R)}^{2}.
 \end{equation}
 Using the nonlinear Hardy inequality in \cite{kim2022soliton},
\begin{equation}\label{2.13.17}
\| (D_{Q} - \frac{A_{\theta}[\epsilon]}{r}) (1 - \chi_{R}) \epsilon \|_{L^{2}}^{2} \sim_{M} \| (1 - \chi_{R}) \epsilon \|_{\dot{\mathcal H}_{m}^{1}}^{2}.
\end{equation}
Therefore, we have proved
\begin{equation}\label{2.13.18}
2 E[Q + \epsilon] \sim \| \chi_{R} \epsilon \|_{\dot{\mathcal H}_{m}^{1}}^{2} + \| (1 - \chi_{R}) \epsilon \|_{\dot{\mathcal H}_{m}^{1}}^{2} + R^{2} M^{2} \| \epsilon \|_{\dot{\mathcal H}_{m}^{1}}^{4} + \| \epsilon \|_{\dot{\mathcal H}_{m}^{1}}^{3} + \frac{1}{R} \| \epsilon \|_{\dot{\mathcal H}_{m}^{1}} +  \| |\partial_{r} \epsilon| + \frac{1}{r} \epsilon \|_{L^{2}(\frac{R}{2} \leq r \leq R)}^{2}.
\end{equation}
Taking $R \ll \frac{1}{\| \epsilon \|_{\dot{H}_{m}^{1}}}$ and averaging over $\log(\| \epsilon \|_{\dot{\mathcal H}_{m}^{1}})$ intervals of the form $\frac{R}{2} \leq r \leq R$, the proof of Proposition $\ref{p2.4}$ is complete.
\end{proof}

Now then, suppose there exists a sequence $\| v_{n} \|_{\mathcal H_{m}^{1}}$ constant, $E[v_{n}] \rightarrow 0$. By the uniqueness of $Q$ (up to scaling) as a solution to $E[Q] = 0$ and the fact that $E[u] \geq 0$, and thus $Q$ is an energy minimizer, $v_{n}$ converges in $\mathcal H_{m}^{1}$, and thus, $v_{n} \rightarrow Q$ in $\mathcal H_{m}^{1}$. Therefore, $\| v_{n} - Q \|_{\dot{\mathcal H}_{m}^{1}} \rightarrow 0$ as $n \rightarrow \infty$.

Next, since
\begin{equation}\label{2.13.19}
\frac{\partial}{\partial \lambda} \lambda Q(\frac{x}{\lambda})|_{\lambda = 1} = \Lambda Q,
\end{equation}
and
\begin{equation}\label{2.13.20}
\frac{\partial}{\partial \gamma} e^{i \gamma} Q|_{\gamma = 0} = i Q,
\end{equation}
combining the implicit function theorem with $(\ref{2.10})$, for $E[v_{n}]$ sufficiently small, we can find a unique $\lambda$ and $\gamma$ such that $(\ref{2.12})$ holds.

Then by Proposition $\ref{p2.4}$, $(\ref{2.13})$ holds.
\end{proof}


Therefore, for $n(M)$ sufficiently large,
\begin{equation}\label{2.14}
\sqrt{E[v](t_{n})} \leq \alpha^{\ast} \| v(t_{n}) \|_{\dot{H}_{m}^{1}} \sim \alpha^{\ast},
\end{equation}
and we can make the decomposition
\begin{equation}\label{2.15}
v(t_{n}) = [Q + \epsilon]_{\lambda(t_{n}), \gamma(t_{n})}.
\end{equation}
Now, note that $(\ref{1.12})$ implies
\begin{equation}\label{2.16}
\int \psi(\frac{x}{R}) Im[\bar{u} \cdot r \partial_{r} u](t_{n}) = \int \psi(\frac{x \lambda(t_{n})}{R}) Im[\bar{v} \cdot r \partial_{r} v](t_{n})
\end{equation}
\begin{equation}\label{2.17}
= \int \psi(\frac{x \lambda(t_{n})}{R}) Im[\bar{Q} \cdot r \partial_{r} Q]
\end{equation}
\begin{equation}\label{2.18}
+ \int \psi(\frac{x \lambda(t_{n})}{R}) Im[\bar{Q} \cdot r \partial_{r} \epsilon](t_{n}) + \int \psi(\frac{x \lambda(t_{n})}{R}) Im[\bar{\epsilon} \cdot r \partial_{r} Q](t_{n})
\end{equation}
\begin{equation}\label{2.19}
+ \int \psi(\frac{x \lambda(t_{n})}{R}) Im[\bar{\epsilon} \cdot r \partial_{r} \epsilon](t_{n}).
\end{equation}

Since $Q$ is real-valued, $(\ref{2.17}) = 0$. Next,
\begin{equation}\label{2.20}
(\ref{2.19}) \lesssim \frac{R}{\lambda(t_{n})} \| \nabla \epsilon_{\lambda, \gamma} \|_{L^{2}} \lesssim_{M[u]} \frac{R}{\lambda(t_{n})} \sqrt{E[v(t_{n})]} \lesssim_{M[u]} R \sqrt{E[u_{0}]}.
\end{equation}
Next,
\begin{equation}\label{2.21}
\int \psi(\frac{x \lambda(t_{n})}{R}) Im[\bar{\epsilon} \cdot r \partial_{r} Q] \lesssim \| \epsilon_{\lambda, \gamma} \|_{L^{2}} \| r \partial_{r} Q_{\lambda, \gamma} \|_{L^{2}} \lesssim 1.
\end{equation}
Finally, integrating by parts,
\begin{equation}\label{2.22}
\aligned
\int \psi(\frac{x \lambda(t_{n})}{R}) Im[\bar{Q}_{\lambda, \gamma} \cdot r \partial_{r} \epsilon_{\lambda, \gamma}] = -\int \psi(\frac{x \lambda(t_{n})}{R}) Im[r \partial_{r} \bar{Q}_{\lambda, \gamma} \cdot \epsilon_{\lambda, \gamma}] \\ - \int \partial_{r}(\psi(\frac{r \lambda(t_{n})}{R}) r) Im[\bar{Q}_{\lambda, \gamma} \cdot \epsilon_{\lambda, \gamma}] \lesssim \| \epsilon \|_{L^{2}} \lesssim 1.
\endaligned
\end{equation}
This proves the Lemma.
\end{proof}

Now then, compute
\begin{equation}\label{2.23}
\aligned
\frac{d}{dt} M(t) = \int \psi(\frac{r}{R}) Re[\bar{u} \cdot r \partial_{r} \Delta u] - \int \psi(\frac{r}{R}) Re[\Delta \bar{u} \cdot r \partial_{r} u] \\
- \int \psi(\frac{r}{R}) Re[\bar{u} \cdot r \partial_{r}(A_{t}[u] u)] + \int \psi(\frac{r}{R}) Re[A_{t}[u] \bar{u} \cdot r \partial_{r} u] \\
-\int \psi(\frac{r}{R}) Re[\bar{u} \cdot r \partial_{r}((\frac{m + A_{\theta}[u]}{r})^{2} u)] + \int \psi(\frac{r}{R}) Re[(\frac{m + A_{\theta}[u]}{r})^{2} \bar{u} \cdot r \partial_{r} u] \\
\int \psi(\frac{r}{R}) Re[\bar{u} \cdot r \partial_{r}(|u|^{2} u)] - \int \psi(\frac{r}{R}) Re[|u|^{2} \bar{u} \cdot r \partial_{r} u]
\endaligned
\end{equation}

\begin{equation}\label{2.24}
\aligned
= \int \phi(\frac{r}{R}) |\partial_{r} u|^{2} - \frac{1}{4} \int \frac{1}{R^{2}} \phi''(\frac{r}{R}) |u|^{2} - \int \psi(\frac{r}{R}) |u|^{2} (m + A_{\theta}[u]) + \int \psi(\frac{r}{R}) |u|^{2} r (\frac{m + A_{\theta}[u]}{r}) \\
+ 2 \int \psi(\frac{r}{R}) |u|^{2} (\frac{m + A_{\theta}[u]}{r})^{2}  - \frac{1}{4} \int \phi(\frac{r}{R}) |u|^{4} - \int \psi(\frac{r}{R}) |u|^{4} \\
= 2 \int \phi(\frac{r}{R}) |\partial_{r} u|^{2} - \frac{1}{2 R^{2}} \int \phi''(\frac{r}{R}) |u|^{2} + 2 \int \psi(\frac{r}{R}) |u|^{2} (\frac{m + A_{\theta}[u]}{r})^{2} \\ - \int \phi(\frac{r}{R}) |u|^{4} - \frac{1}{2} \int [\psi(\frac{r}{R}) - \phi(\frac{r}{R})] |u|^{4}.
\endaligned
\end{equation}

Integrating by parts,
\begin{equation}\label{2.24.1}
\int \phi(\frac{r}{R}) |\partial_{r} u|^{2} = \int |\partial_{r}(\chi(\frac{r}{R}) u)|^{2} + O(\int_{r \geq R} \frac{1}{R^{2}} |u|^{2}).
\end{equation}
Therefore,
\begin{equation}\label{2.24.2}
\aligned
(\ref{2.24}) = 2 \int |\partial_{r}(\chi(\frac{r}{R}) u)|^{2} + 2 \int |\chi(\frac{r}{R}) u|^{2} (\frac{m + A_{\theta}[\chi(\frac{r}{R}) u]}{r})^{2} - \int |\chi(\frac{r}{R}) u|^{4} \\
+ O(\int_{r \geq R} \frac{1}{R^{2}} |u|^{2}) + 2 \int (\psi(\frac{r}{R}) - \phi(\frac{r}{R})) (\frac{m + A_{\theta}[u]}{r})^{2} |u|^{2} + \int |\chi(\frac{r}{R}) u|^{2} \cdot \{ (\frac{m + A_{\theta}[u]}{r})^{2} - (\frac{m + A_{\theta}[\chi u]}{r})^{2} \} \\
- \int [\chi(\frac{r}{R})^{2} - \chi(\frac{r}{R})^{4}] |u|^{4} - \frac{1}{2} \int [\psi(\frac{r}{R}) - \phi(\frac{r}{R})] |u|^{4}.
\endaligned
\end{equation}
Using $(\ref{1.3})$,
\begin{equation}\label{2.24.3}
2 \int |\partial_{r}(\chi(\frac{r}{R}) u)|^{2} + 2 \int |\chi(\frac{r}{R}) u|^{2} (\frac{m + A_{\theta}[\chi(\frac{r}{R}) u]}{r})^{2} - \int |\chi(\frac{r}{R}) u|^{4} = 4 E[\chi_{R} u].
\end{equation}
Next, since $A_{\theta}[u] \lesssim M^{2}$,
\begin{equation}\label{2.24.4}
O(\int_{r \geq R} \frac{1}{R^{2}} |u|^{2}) + 2 \int (\psi(\frac{r}{R}) - \phi(\frac{r}{R})) (\frac{m + A_{\theta}[u]}{r})^{2} |u|^{2} \lesssim \frac{1}{R^{2}} \int_{r \geq R} |u|^{2}.
\end{equation}
By direct computation,
\begin{equation}\label{2.24.5}
|A_{\theta}[u] - A_{\theta}[\chi_{R} u]| = \int_{0}^{r} [|u|^{2} - |\chi_{R} u|^{2}] r' dr',
\end{equation}
so $|A_{\theta}[u] - A_{\theta}[\chi_{R} u]|$ is supported on $r \geq R$. Therefore,
\begin{equation}\label{2.24.6}
\int |\chi(\frac{r}{R}) u|^{2} \cdot \{ (\frac{m + A_{\theta}[u]}{r})^{2} - (\frac{m + A_{\theta}[\chi_{R} u]}{r})^{2} \} \lesssim \frac{1}{R^{2}} \int_{r \geq R} |u|^{2}.
\end{equation}
Finally, for
\begin{equation}\label{2.24.7}
- \int [\chi(\frac{r}{R})^{2} - \chi(\frac{r}{R})^{4}] |u|^{4} - \frac{1}{2} \int [\psi(\frac{r}{R}) - \phi(\frac{r}{R})] |u|^{4},
\end{equation}
consider two cases separately, as in Lemma $\ref{l2.2}$. For $\| u \|_{\dot{H}^{1}} \lesssim 1$, by interpolation,
\begin{equation}\label{2.24.8}
- \int [\chi(\frac{r}{R})^{2} - \chi(\frac{r}{R})^{4}] |u|^{4} - \frac{1}{2} \int [\psi(\frac{r}{R}) - \phi(\frac{r}{R})] |u|^{4} \leq \int_{r \geq R} |u|^{4} \lesssim (\int_{r \geq R} |u|^{2}) \| u \|_{\dot{H}^{1}}^{2} \leq o_{R}(1),
\end{equation}
where $o_{R}(1)$ is a quantity that approaches $0$ as $R \rightarrow \infty$. For the last step, $(\ref{2.1})$ is used. When $\| u \|_{\dot{H}^{1}} \gg 1$, then by Proposition $\ref{p2.4}$,
\begin{equation}\label{2.24.9}
u = Q_{\lambda(t), \gamma(t)} + \epsilon(t, x) = \lambda(t) e^{i \gamma(t)} Q(\lambda(t) x) + \epsilon(t, x),
\end{equation}
with $\lambda(t) \gg 1$. Then by direct computation,
\begin{equation}\label{2.24.10}
\int_{r \geq R} \lambda(t)^{4} Q(\frac{x}{\lambda(t)})^{4} dx \leq o_{R}(1).
\end{equation}
Also, by Proposition $\ref{p2.4}$ and $(\ref{2.1})$,
\begin{equation}\label{2.24.11}
\int_{r \geq R} |\epsilon(t, x)|^{4} dx \lesssim \| \epsilon \|_{\dot{H}^{1}}^{2} \| \epsilon \|_{L^{2}(r \geq R)}^{2} \leq o_{R}(1).
\end{equation}

 Therefore,
\begin{equation}\label{2.25}
\frac{d}{dt} M(t) = E[\chi_{R} u] + o_{R}(1).
\end{equation}
Therefore,
\begin{equation}\label{2.26}
\int_{0}^{T} E[\chi_{R} u] dt \lesssim R + T o_{R}(1).
\end{equation}
Taking $R = T^{1/3}$, there exists a sequence $t_{n}' \rightarrow \infty$, $R_{n} \rightarrow \infty$, satisfying
\begin{equation}\label{2.27}
E[\chi(\frac{r}{R_{n}}) u(t_{n}')] \rightarrow 0.
\end{equation}
Therefore,
\begin{equation}\label{2.28}
\chi(\frac{r}{R_{n}}) u(t_{n}') = [Q + \epsilon]_{\lambda_{n}, \gamma_{n}},
\end{equation}
and for $n$ sufficiently large,
\begin{equation}\label{2.29}
\| \epsilon \|_{\dot{H}_{m}^{1}}^{2} \sim E[\chi(\frac{r}{R_{n}}) u(t_{n}')].
\end{equation}
Therefore, $\| \epsilon \|_{L^{2}(|x| \leq R_{n}')} \rightarrow 0$ for some $R_{n}' \rightarrow \infty$, and
\begin{equation}\label{2.30}
\| u \|_{L^{2}(|x| \geq R_{n}')} \rightarrow 0.
\end{equation}

The bounds $(\ref{2.1})$ on the mass implies that there exists $\lambda_{0} > 0$ such that $\lambda(t_{n}') \geq \lambda_{0} > 0$. Therefore,
\begin{equation}\label{2.31}
\| Q_{\lambda(t_{n}'), \gamma(t_{n}')} \|_{L^{2}(|x| \leq R_{n}')} \rightarrow \| Q \|_{L^{2}},
\end{equation}
as $n \rightarrow \infty$.
\end{proof}

\begin{remark}
Note that the estimate in $(\ref{2.26})$ utilizes that $T \rightarrow \infty$. We cannot use this argument if $u$ blows up in finite time in both directions.
\end{remark}

\section{Rigidity for finite time blowup solutions at the soliton}
Now we duplicate the result of \cite{merle1993determination} for the Chern--Simons--Schr{\"o}dinger equation, showing that if $u$ is a blowup solution to $(\ref{1.1})$ with $\| u_{0} \|_{L^{2}} = \| Q \|_{L^{2}}$ and $u_{0} \in H^{1}$, the soluton $u$ must be of the form $(\ref{1.7})$. Such a solution would violate $(\ref{2.1})$ in the scattering time direction.

\begin{theorem}\label{t3.1}
If $u$ is a finite time blowup solution with $\| u_{0} \|_{L^{2}} = \| Q \|_{L^{2}}$ and $u_{0} \in H^{1}$, then $u$ is equal to a pseudoconformal transformation of a soliton.
\end{theorem}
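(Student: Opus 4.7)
My proof would duplicate Merle's argument in \cite{merle1993determination}, using the decomposition machinery of Section 2 as the CSS analogue of Weinstein's sharp Gagliardo--Nirenberg step. The outline is as follows. First, since $u_0 \in H^1$ and $u$ blows up at finite time $T$, $\|\nabla u(t)\|_{L^2} \to \infty$ as $t \nearrow T$, so by Proposition \ref{p2.3} combined with the threshold condition $\|u\|_{L^2} = \|Q\|_{L^2}$, along any sequence $t_n \nearrow T$ we obtain the profile decomposition
\[
u(t_n) = [Q + \epsilon(t_n)]_{\lambda(t_n), \gamma(t_n)}, \qquad \|\epsilon(t_n)\|_{\dot{\mathcal H}_{m}^{1}} \to 0.
\]
Because the $L^2$-mass matches the threshold exactly, in fact $\|\epsilon(t_n)\|_{L^2} \to 0$ as well; all of the mass concentrates into a single bubble, and by $m$-equivariance the concentration point is forced to be the origin.

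Second, I would establish the variance bound $\int r^2 |u(t,x)|^2\, dx = O((T-t)^2)$ as $t \nearrow T$. The tool is a second-order virial identity obtained by differentiating $\int r^2 |u|^2$ twice: starting from $(\ref{2.2})$ and using the conservation laws, one obtains (modulo gauge terms that ultimately cancel in the self-dual case)
\[
\tfrac{d^{2}}{dt^{2}} \int r^2 |u(t,x)|^2 \, dx = 16 E[u_{0}].
\]
Integrating twice and enforcing nonnegativity of the variance on $[0,T)$ gives both $E[u_{0}] \geq 0$ and the desired quadratic rate of vanishing as $t \nearrow T$.

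Third, I would apply the pseudoconformal transformation $\mathcal C$ in $(\ref{1.6})$ to produce $v = \mathcal C u$. The variance bound above translates to an $H^{1}$ bound on $v$ near $t = -1/T$; since $(\ref{1.1})$ is invariant under $\mathcal C$ and locally well-posed in $H_{m}^{1}$, $v$ extends to an $H_{m}^{1}$ solution through $t = -1/T$ with $\|v\|_{L^{2}} = \|Q\|_{L^{2}}$. The vanishing of the variance of $u$ at $t = T$, combined with the CSS pseudoconformal identity expressing $E[v]$ in terms of the rescaled variance of $u$, forces $E[v] = 0$. Since $E[v] = \tfrac{1}{2} \|D_{v} v\|_{L^{2}}^{2} \geq 0$, we conclude $D_{v} v = 0$, so by the uniqueness statement in Definition \ref{d1.1}, $v$ equals the Jackiw--Pi vortex $Q$ up to the symmetries $(\ref{1.12})$ and phase. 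Undoing $\mathcal C$ then gives that $u$ is a pseudoconformal transform of $Q$, i.e., of the form $(\ref{1.7})$, as claimed.

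The main obstacle will be controlling the nonlocal gauge contributions $A_{\theta}[u]$ and $A_{t}[u]$ in the second virial identity and in the corresponding pseudoconformal identity for $E[\mathcal C u]$. For the standard mass-critical NLS these identities are textbook, but in CSS the nonlocal gauge fields introduce extra terms whose cancellation relies on the self-dual structure $E[u] = \tfrac{1}{2} \|D_{u} u\|_{L^{2}}^{2}$ and on the $m$-equivariant symmetry. Once both identities are cleanly established, the variance-to-zero-energy argument closes exactly as in \cite{merle1993determination}.
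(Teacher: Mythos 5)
Your overall architecture matches the paper's: modulational decomposition near $Q$ forced by the threshold mass, quadratic vanishing of the variance at the blowup time, pseudoconformal transform, zero energy, hence a soliton. But your second step has a genuine gap. You propose to prove $\int r^{2}|u(t)|^{2}\,dx = O((T-t)^{2})$ by writing the untruncated second-order virial identity $\tfrac{d^{2}}{dt^{2}}\int r^{2}|u|^{2} = 16E[u_{0}]$ and ``integrating twice, enforcing nonnegativity of the variance on $[0,T)$.'' The hypothesis is only $u_{0}\in H_{m}^{1}$: the variance $\int r^{2}|u_{0}|^{2}\,dx$ may be infinite, so there is no finite quantity to differentiate and no starting value to integrate from. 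This is exactly the difficulty that separates the $H^{1}$ determination theorem of \cite{merle1993determination} from the elementary finite-variance version, and your outline assumes it away. (Your side remark that nonnegativity of the variance yields $E[u_{0}]\geq 0$ is also vacuous here, since $E\geq 0$ automatically by the self-dual form $(\ref{1.4.3})$.)

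The repair is to work with truncated quantities and to integrate from the blowup time backwards rather than from the initial time forwards. Your first step (single-bubble concentration with $\|\epsilon\|_{L^{2}}\to 0$ and $\lambda(t)\to\infty$) gives that the localized variance $\|\phi(x/R)\,|x|\,u(t)\|_{L^{2}}$ tends to $0$ as $t\nearrow T$ for each fixed $R$ (Lemma $\ref{l3.2}$); one also needs the separate fact that the localized momentum $\int \phi(r/R)^{2}\,\mathrm{Im}(\bar{u}\cdot r\partial_{r}u)$ tends to $0$ as $t\nearrow T$, which is the paper's Lemma $\ref{l3.3}$, obtained by rerunning the computation of Lemma $\ref{l2.2}$ with $\|\epsilon\|_{L^{2}}\to 0$; your outline omits this. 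With both vanishing statements in hand, one integrates the truncated first- and second-order virial identities $(\ref{3.9})$--$(\ref{3.10})$ from $t$ up to the blowup time, controls the cutoff errors, and only then sends $R\to\infty$; this simultaneously proves that the variance is finite for $t<T$ and yields the exact identity $(\ref{3.11})$. After that, your pseudoconformal step closes the argument exactly as in the paper, and you do not need to extend $v$ past the transformed time: the identity $(\ref{3.11})$ already forces $E[v]=0$ on the whole existence interval, whence $D_{v}v=0$ and $v$ is the vortex up to symmetries.
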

\begin{proof}
By time translation symmetry and the scaling symmetry, suppose that $u$ blows up at time $t = 0$, and let $u_{0}$ be the data for $u(t, r)$ at $t = -1$.

\begin{lemma}\label{l3.2}
Fix $R > 0$ large. Let $\phi \in C_{0}^{\infty}(\mathbb{R}^{2})$ be a smooth cut-off, $\phi(x) = 1$ for $|x| \leq 1$, and $\phi(x) = 0$ for $|x| > 2$. Then,
\begin{equation}\label{3.1}
\lim_{t \nearrow 0} \| \phi(\frac{x}{R}) |x| u(t, x) \|_{L^{2}} = 0.
\end{equation}
\end{lemma}
\begin{proof}
If $u$ blows up in finite time, $\lim_{t \nearrow 0} \| u(t) \|_{\dot{H}^{1}} = \infty$. Now let 
\begin{equation}\label{3.2}
\lambda(t) = \frac{\| u \|_{\dot{H}^{1}}}{\| Q \|_{\dot{H}^{1}}},
\end{equation}
and let
\begin{equation}\label{3.3}
v(t,x) = \frac{1}{\lambda(t)} u(t, \frac{x}{\lambda(t)}).
\end{equation}
Then by $(\ref{2.9})$, $E(v(t)) \rightarrow 0$, and $\| v(t) \|_{\dot{H}^{1}}$ and $\| v(t) \|_{L^{2}}$ are uniformly bounded for $-1 < t < 0$. Therefore, by Proposition $\ref{p2.3}$, for $t$ sufficiently close to $0$,
\begin{equation}\label{3.4}
v(t) = [Q + \epsilon]_{\tilde{\lambda}(t), \tilde{\gamma}(t)},
\end{equation}
and furthermore,
\begin{equation}\label{3.5}
\| \epsilon \|_{\dot{H}_{m}^{1}} \rightarrow 0.
\end{equation}
By $(\ref{3.4})$, $(\ref{3.5})$, and $\| v \|_{\dot{H}^{1}} = 1$, $\tilde{\lambda}(t) \sim 1$, so
\begin{equation}\label{3.6}
e^{-i \tilde{\gamma}(t)} v(t) \rightharpoonup Q, \qquad \text{in} \qquad L^{2},
\end{equation}
and since $\| v \|_{L^{2}} = \| Q \|_{L^{2}}$, $(\ref{3.6})$ can be upgraded to convergence in $L^{2}$. Since $\lambda(t) \nearrow \infty$ as $t \nearrow 0$, $(\ref{3.1})$ holds.
\end{proof}

\begin{lemma}\label{l3.3}
For any $R > 0$,
\begin{equation}\label{3.7}
\lim_{t \nearrow 0} \int \phi(\frac{x}{R}) Im(\bar{u} \cdot r \partial_{r} u) dx = 0.
\end{equation}
\end{lemma}
\begin{proof}
The argument is identical to the argument proving Lemma $\ref{l2.2}$, except that now, insert $\| \epsilon(t) \|_{L^{2}} \rightarrow 0$ into $(\ref{2.17})$--$(\ref{2.19})$, proving $(\ref{3.7})$.
\end{proof}

Returning to the proof of Theorem $\ref{t1.2}$, by direct computation,
\begin{equation}\label{3.8}
\frac{d}{dt} \int r^{2} |u|^{2} dx = 4 \int Im(\bar{u} \cdot r \partial_{r} u) dx.
\end{equation}
Integrating by parts,
\begin{equation}\label{3.9}
\frac{d}{dt} \int \phi(\frac{r}{R})^{2} r^{2} |u|^{2} dx = 4 \int \phi^{2}(\frac{x}{R}) Im(\bar{u} \cdot r \partial_{r} u) dx + \frac{8}{R} \int \phi(\frac{r}{R}) \phi'(\frac{r}{R}) r Im(\bar{u} \cdot r \partial_{r} u) dx.
\end{equation}
Also, by direct computation and integrating by parts,
\begin{equation}\label{3.10}
\aligned
\frac{d}{dt} \int \phi(\frac{r}{R})^{2} Im(\bar{u} \cdot r \partial_{r} u) dx = 2 \int \phi(\frac{r}{R})^{2} [|\partial_{r} u|^{2} + (\frac{m + A_{\theta}[u]}{r})^{2} |u|^{2} - \frac{1}{2} |u|^{4}]  dx \\ + \frac{C}{R} \int \phi'(\frac{r}{R}) \phi(\frac{r}{R}) \{ r |u_{r}|^{2} + \frac{1}{r} |u|^{2} \} dx.
\endaligned
\end{equation}
Therefore, by Lemmas $\ref{l3.2}$ and $\ref{l3.3}$, taking $R \nearrow \infty$, for any $-1 < t < 0$,
\begin{equation}\label{3.11}
\int |x|^{2} |u(t,x)|^{2} dx = 8 E[u] t^{2}.
\end{equation}
Making a pseudoconformal transformation of the solution, let
\begin{equation}\label{3.12}
v(t, r) = \frac{1}{|t|} u(-\frac{1}{t}, \frac{r}{|t|}) e^{i r^{2}/4t}.
\end{equation}
By $(\ref{3.12})$,
\begin{equation}\label{3.13}
\frac{1}{2} \int |\partial_{r} v|^{2} dx = \frac{1}{2t^{2}} \| u_{r}(-\frac{1}{t}, \cdot) \|_{L^{2}}, \qquad \frac{1}{2} \int \frac{r^{2}}{4t^{2}} |u(-\frac{1}{t}, \frac{r}{t})|^{2} r dr = \frac{1}{t^{2}} E[u],
\end{equation}
and by $(\ref{3.9})$,
\begin{equation}\label{3.14}
Re(\int \frac{1}{t^{2}} \overline{u_{r}(-\frac{1}{t}, \frac{r}{t})} \cdot \frac{ir}{2t^{2}} u(-\frac{1}{t}, \frac{r}{t}) dx) = -\frac{2}{t^{2}} E[u].
\end{equation}
Therefore, $E[v] = 0$, and thus, $v$ is a soliton, so $u$ is a pseudoconformal transformation of a soliton.
\end{proof}

\section{The Liouville theorem}
Now we have proved that the only solution to $(\ref{1.1})$ that satisfies $(\ref{2.1})$ has mass $\| u_{0} \|_{L^{2}} = \| Q \|_{L^{2}}$ and is global in both time directions. Then we complete the proof of the Liouville theorem by showing that $u$ is a soliton.

\begin{theorem}\label{t4.1}
The solution satisfying $\| u \|_{L^{2}} = \| Q \|_{L^{2}}$ and $(\ref{1.12})$ is the soliton.
\end{theorem}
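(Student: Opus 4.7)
The goal is to show $E[u_0] = 0$; the Bogomol'nyi identity $E[u] = \tfrac{1}{2}\|D_u u\|_{L^2}^2$ then forces $D_{u(t)} u(t) = 0$ for every $t$, and by Definition \ref{d1.1} this identifies $u(t)$ with a soliton up to scaling and phase. By Theorems \ref{t2.1} and \ref{t3.1} we already know $u$ is global on $\mathbb{R}$ with mass $\|Q\|_{L^2}$, initial data $u_0 \in H^1_m$, and satisfying the compactness condition (\ref{1.11}).

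The approach is a global-in-time truncated virial argument. With $\psi$ as in Lemma \ref{l2.2} and $V_R(t) = \int \psi(r/R)\,\mathrm{Im}(\bar u \cdot r \partial_r u)\,dx$, Lemma \ref{l2.2} gives $|V_R(t)| \lesssim R$ uniformly in $t$, and the computation (\ref{2.23})--(\ref{2.24.3}) from the proof of Theorem \ref{t2.1} yields
\begin{equation*}
\partial_t V_R(t) = 4 E[\chi_R u(t)] + o_R(1),
\end{equation*}
with $o_R(1) \to 0$ uniformly in $t$ by (\ref{1.11}). Integrating over $[-T, T]$ and using $E[\chi_R u] \geq 0$ gives
\begin{equation*}
4 \int_{-T}^T E[\chi_R u(t)]\,dt \leq |V_R(T)| + |V_R(-T)| + C T \cdot o_R(1) \lesssim R + T \cdot o_R(1).
\end{equation*}
The key additional step is to upgrade this into $E[u_0] = 0$ by establishing the matching uniform lower bound $E[\chi_R u(t)] \geq E[u_0] - o_R(1)$. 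Writing $2 E[\chi_R u] = \|D_{\chi_R u}(\chi_R u)\|_{L^2}^2$, one has
\begin{equation*}
D_{\chi_R u}(\chi_R u) - \chi_R D_u u = (\partial_r \chi_R) u + \chi_R \frac{A_\theta[u] - A_\theta[\chi_R u]}{r} u,
\end{equation*}
and both corrections are $L^2$-small uniformly in $t$ using (\ref{1.11}) and the observation that $A_\theta[u] - A_\theta[\chi_R u]$ is supported on $r \geq R$ with size $\lesssim \|u\|_{L^2(r \geq R)}^2 = o_R(1)$. Combined with the conservation $\|D_{u(t)} u(t)\|_{L^2}^2 = 2 E[u_0]$, the uniform lower bound reduces to showing that $\int_{r \geq R} |D_u u|^2\,r\,dr \to 0$ uniformly in $t$; the $r^{-1}(m+A_\theta)u$ piece of $|D_u u|^2$ is controlled by Hardy and (\ref{1.11}), while the $\partial_r u$ piece is handled via the two-case split of Lemma \ref{l2.2} (uniform $\dot H^1$ bound versus modulation near the soliton via Propositions \ref{p2.3} and \ref{p2.4}).

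Substituting the lower bound back into the virial inequality gives $8T(E[u_0] - o_R(1)) \lesssim R + T \cdot o_R(1)$; dividing by $T$ and sending $T \to \infty$, then $R \to \infty$, forces $E[u_0] \leq 0$, so $E[u_0] = 0$. Then $D_{u(t)} u(t) = 0$ for every $t$, and Definition \ref{d1.1} gives $u(t, r) = e^{i \gamma(t)} \lambda(t)^{-1} Q(r/\lambda(t))$. Substituting into (\ref{1.1}), the self-dual cancellations collapse the nonlinearity and reduce the evolution to an ODE system for $(\lambda, \gamma)$; the only solutions consistent with conservation of mass are $\lambda(t) \equiv \lambda_0$ and $\gamma(t) = \gamma_0 + \omega t / \lambda_0^2$ for some $\omega \in \mathbb{R}$, recovering exactly the soliton family (\ref{1.12}) up to a constant phase. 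The main obstacle is the uniform-in-time tightness $\int_{r \geq R} |\partial_r u(t)|^2\, r\,dr \to 0$, since $\|u(t)\|_{H^1}$ need not be uniformly bounded a priori; the modulation-based case analysis of Lemma \ref{l2.2} is what makes this tractable by reducing the large-$\dot H^1$ regime to the coercive behavior of $E$ near the soliton.
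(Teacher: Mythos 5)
Your setup (the truncated virial quantity $V_R$, the bound $|V_R(t)| \lesssim R$ from Lemma \ref{l2.2}, and the identity $\partial_t V_R = 4E[\chi_R u] + o_R(1)$ from $(\ref{2.23})$--$(\ref{2.25})$) matches the paper, but the step you yourself flag as the ``key additional step'' is a genuine gap. The uniform-in-time lower bound $E[\chi_R u(t)] \geq E[u_0] - o_R(1)$ reduces, as you correctly compute, to showing $\int_{r \geq R} |\partial_r u(t)|^2\, r\, dr \to 0$ uniformly in $t$, and this does \emph{not} follow from the tools you cite. Hypothesis $(\ref{1.11})$ gives tightness of the \emph{mass} only; kinetic energy can sit at large radii on arbitrarily small mass, so neither case of the dichotomy in Lemma \ref{l2.2} delivers it. In Case 1 a uniform $\dot H^1$ bound says nothing about where the gradient lives, and in Case 2 the modulation remainder satisfies only $\| \epsilon \|_{\dot{\mathcal H}_m^1} \sim \lambda \sqrt{E[u]}$, i.e.\ the remainder carries kinetic energy of size $\sim \sqrt{E[u_0]}$ in the physical frame with no localization information whatsoever. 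Assuming this tightness is essentially assuming $H^1$-precompactness of the orbit modulo symmetries, which is very close to the conclusion you are trying to prove; if $E[u_0] > 0$ the ``radiation'' $\epsilon$ is exactly the part whose energy you cannot localize a priori.

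The paper closes this gap by a different mechanism that your argument never invokes: the exact mass identity $\|u\|_{L^2} = \|Q\|_{L^2}$ applied to the decomposition $u = [Q + \epsilon]_{\lambda,\gamma}$ forces $\|\epsilon\|_{L^2}^2 = -2\langle Q, \epsilon \rangle$, and integrating $\langle Q, \epsilon\rangle$ by parts against the antiderivative $\tilde Q$ yields the pointwise-in-time bound $(\ref{4.11})$, $\|\epsilon(t)\|_{L^2}^2 \lesssim \lambda^{-1} E[\chi_R u(t)]^{1/2} + \lambda^{-3}R^{-3}$. This converts the virial \emph{upper} bound on $\int E[\chi_R u]\,dt$ into smallness of $\|\epsilon(t)\|_{L^2}$ at many times; the bootstrap over the intervals $I_j = [t_j^-, t_j^+]$ with $\|\epsilon(t_j^\pm)\|_{L^2} = 2^{-j}$ in $(\ref{4.19})$--$(\ref{4.22})$ then gives $\int_{\mathbb{R}} E[u]\,dt \lesssim 1$, which forces the conserved energy to vanish. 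No uniform lower bound on the localized energy, and no tightness of the kinetic energy, is ever needed. To repair your proof you would need to replace the lower-bound step with an argument of this type (or otherwise exploit the mass equality quantitatively); as written, the argument does not go through.
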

\begin{proof}
We again use the virial identity in $(\ref{2.23})$ and $(\ref{2.24})$, only this time we integrate from $-T$ to $T$. Integrating by parts,
\begin{equation}\label{4.1}
\int \psi(\frac{r}{R}) Re[\bar{u} \cdot r \partial_{r} \Delta u] - \int \psi(\frac{r}{R}) Re[\Delta \bar{u} \cdot r \partial_{r} u] = 2 \int \phi(\frac{r}{R}) |\partial_{r} u|^{2} + O(\frac{1}{R^{2}} \int_{r \geq R} |u|^{2}),
\end{equation}
where again $\phi(r) = \partial_{r} (r \psi(r))$ and $\phi(r) = \chi(r)^{2}$ for some $\chi \in C_{0}^{\infty}$. Now then,
\begin{equation}\label{4.2}
\frac{1}{R^{2}} \int_{r \geq R} \lambda^{2} Q(\frac{x}{\lambda})^{2} dx = \frac{\lambda}{R^{2}} \int_{R}^{\infty} \frac{(\lambda r)^{2m + 1}}{(1 + (\lambda r)^{2m + 2})^{2}} dr \lesssim \frac{1}{R^{4} \lambda^{2}}.
\end{equation}
Next, for
\begin{equation}\label{4.3}
u = \lambda Q(\lambda x) + \lambda \epsilon(\lambda x),
\end{equation}
since $\| u \|_{L^{2}} = \| Q \|_{L^{2}}$, by standard linear algebra,
\begin{equation}\label{4.4}
\| \epsilon \|_{L^{2}}^{2} = -2\langle \lambda Q(\lambda r), \lambda \epsilon(\lambda r) \rangle.
\end{equation}
Therefore,
\begin{equation}\label{4.5}
\| \epsilon \|_{L^{2}}^{2} = 2 |\langle \epsilon, Q \rangle| \leq 2|\langle \chi(\frac{r}{R}) \lambda \epsilon(\lambda x), \lambda Q(\lambda x) \rangle| + 2|\langle (1 - \chi(\frac{r}{R})) \lambda \epsilon(\lambda x), \lambda Q(\lambda x)\rangle|.
\end{equation}
By H{\"o}lder's inequality,
\begin{equation}\label{4.6}
2|\langle (1 - \chi(\frac{r}{R})) \lambda \epsilon(\lambda x), \lambda Q(\lambda x)\rangle| \lesssim \frac{1}{R^{2} \lambda^{2}} \| \epsilon \|_{L^{2}}.
\end{equation}
Now let
\begin{equation}\label{4.7}
\tilde{Q}(r) = -\int_{r}^{\infty} Q(r) dr.
\end{equation}
Since $Q(r) \lesssim \frac{1}{r^{4}}$ for $r$ large and $Q(r) \leq 1$ for all $r$, $\tilde{Q}(r) \in L^{2}(\mathbb{R}^{2})$. Moreover,
\begin{equation}\label{4.8}
\partial_{r} \tilde{Q}(r) = Q(r), \qquad \text{and} \qquad \partial_{r} \tilde{Q}(\lambda r) = \lambda Q(\lambda r).
\end{equation}
Therefore,
\begin{equation}\label{4.9}
\langle \chi(\frac{r}{R}) \lambda \epsilon(\lambda r), \lambda Q(\lambda r) \rangle = \langle \chi(\frac{r}{R}) \lambda \epsilon(\lambda r), \partial_{r} \tilde{Q}(\lambda r) \rangle.
\end{equation}
Integrating by parts,
\begin{equation}\label{4.10}
(\ref{4.9}) \lesssim \frac{1}{\lambda} \| \chi(\frac{r}{R}) \lambda \epsilon(\lambda r) \|_{\dot{H}_{m}^{1}} \lesssim \frac{1}{\lambda} E[\chi(\frac{r}{R}) \lambda \epsilon(\lambda r) + \lambda Q(\lambda r)]^{1/2} \lesssim \frac{1}{\lambda} E[\chi(\frac{r}{R}) u]^{1/2} + \frac{1}{\lambda^{3} R^{3}}.
\end{equation}
Therefore,
\begin{equation}\label{4.11}
\| \epsilon \|_{L^{2}}^{2} \lesssim \frac{1}{\lambda} E[\chi(\frac{r}{R}) u]^{1/2} + \frac{1}{\lambda^{3} R^{3}},
\end{equation}
and by $(\ref{4.2})$ and $(\ref{4.11})$,
\begin{equation}\label{4.12}
\int_{r \geq R} \frac{1}{R^{2}} |u|^{2} \lesssim \frac{1}{\lambda R^{2}} E[\chi(\frac{r}{R}) u]^{1/2} + \frac{1}{\lambda^{2} R^{4}}.
\end{equation}
Also by $(\ref{4.2})$,
\begin{equation}\label{4.13}
\int_{r \geq R} \frac{R}{r} |u|^{4} dx \lesssim \frac{1}{\lambda^{2} R^{4}} + \int_{r \geq R} \frac{R}{r} |\lambda \epsilon(\lambda x)|^{4} dx.
\end{equation}
By standard perturbation theory and the fact that the $L_{t,x}^{4}$ norm is invariant under the scaling $(\ref{1.12})$,
\begin{equation}\label{4.14}
\int_{t_{0}}^{t_{0} + \frac{1}{\lambda(t_{0})^{2}}} \int |\lambda \epsilon(t, \lambda x)|^{4} dx dt \lesssim \| \epsilon(t_{0}) \|_{L^{2}}^{4} \lesssim \lambda(t_{0})^{2} \int_{t_{0}}^{t_{0} + \frac{1}{\lambda(t_{0})^{2}}} \| \epsilon(t) \|_{L^{2}}^{4}.
\end{equation}
Plugging in $(\ref{4.11})$, since $\lambda(t) \sim \lambda(t_{0})$ for $t \in [t_{0}, t_{0} + \frac{1}{\lambda(t_{0})^{2}}]$,
\begin{equation}\label{4.15}
\lambda(t_{0})^{2} \int_{t_{0}}^{t_{0} + \frac{1}{\lambda(t_{0})^{2}}} \| \epsilon(t) \|_{L^{2}}^{4} \lesssim  \int_{t_{0}}^{t_{0} + \frac{1}{\lambda(t_{0})^{2}}} E[\chi(\frac{r}{R}) u] dt + \int_{t_{0}}^{t_{0} + \frac{1}{\lambda(t_{0})^{2}}} \frac{1}{R^{4}} dt.
\end{equation}
Making an averaging argument, for a fixed $R_{0}$,
\begin{equation}
\sum_{j} \int_{r \geq 2^{j} R_{0}} \frac{2^{j} R_{0}}{r} |f(x)|^{4} dx \lesssim \| f(x) \|_{L^{4}}^{4}.
\end{equation}
Therefore, for any $\delta > 0$ there exists some $R_{0} \leq R_{\ast} \leq C(\delta) R_{0}$ such that
\begin{equation}\label{4.16}
\aligned
 \int_{t_{0}}^{t_{0} + \frac{1}{\lambda(t_{0})^{2}}} \int_{r \geq R_{\ast}} \frac{R_{\ast}}{r} |\lambda \epsilon(t, \lambda x)|^{4} dx dt \lesssim \delta \int_{t_{0}}^{t_{0} + \frac{1}{\lambda(t_{0})^{2}}} \| \epsilon(t) \|_{L^{4}}^{4} dt \\ \lesssim \delta \int_{t_{0}}^{t_{0} + \frac{1}{\lambda(t_{0})^{2}}} E[\chi(\frac{r}{R_{\ast}}) u] dt + \int_{t_{0}}^{t_{0} + \frac{1}{\lambda(t_{0})^{2}}} \frac{1}{R^{4}} dt.
 \endaligned
\end{equation}
Therefore, we have proved
\begin{equation}\label{4.17}
\int_{a}^{b} E[\chi(\frac{r}{R}) u] dt \lesssim R \| \epsilon(a) \|_{L^{2}} + R \| \epsilon(b) \|_{L^{2}} + \int_{a}^{b} \frac{1}{R^{4}} dt.
\end{equation}
Taking $R = T^{1/4}$, averaging $(\ref{4.17})$, and plugging in $(\ref{4.11})$, we have proved
\begin{equation}\label{4.18}
\lim_{T \nearrow \infty} \inf_{t \in [0, T]} \| \epsilon(t) \|_{L^{2}} = 0, \qquad \lim_{T \nearrow \infty} \inf_{t \in [-T, 0]} \| \epsilon(t) \|_{L^{2}} = 0.
\end{equation}
Now for any $j \in \mathbb{Z}$, $j \geq 0$, let
\begin{equation}\label{4.19}
t_{j}^{+} = \inf \{ t \geq 0 : \| \epsilon(t) \|_{L^{2}} = 2^{-j} \}, \qquad t_{j}^{-} = \sum \{ t < 0 : \| \epsilon(t) \|_{L^{2}} = 2^{-j} \}.
\end{equation}
Then let $T_{j} = t_{j}^{+} - t_{j}^{-}$, $I_{j} = [t_{j}^{-}, t_{j}^{+}]$. Then $T_{j} \rightarrow \infty$ as $j \rightarrow \infty$. Then by $(\ref{4.11})$,
\begin{equation}\label{4.20}
\| \epsilon(t_{j}^{\pm}) \|_{L^{2}}^{4} \lesssim \frac{1}{\lambda(t_{j}^{\pm})^{2}} E[\chi(\frac{r}{R}) u(t_{j}^{\pm})] + \frac{1}{R_{j}^{6}}.
\end{equation}
Therefore, by $(\ref{4.17})$,
\begin{equation}\label{4.21}
\int_{I_{j}} E[\chi(\frac{r}{R}) u] dt \lesssim \frac{R_{j}}{T_{j}} (\int_{I_{j}} E[\chi(\frac{r}{R}) u]^{1/4} dt) + \frac{T_{j}}{R_{j}^{4}}.
\end{equation}
Therefore,
\begin{equation}\label{4.22}
\int_{I_{j}} E[\chi(\frac{r}{R}) u] dt \lesssim \frac{R_{j}}{T_{j}^{1/4}} + \frac{T_{j}}{R_{j}^{4}} \sim 1.
\end{equation}
Then, by the dominated convergence theorem, taking $j \rightarrow \infty$,
\begin{equation}\label{4.23}
\int_{\mathbb{R}} E[u] dt \lesssim 1,
\end{equation}
which implies $E[u] \equiv 0$, and thus $u$ is a soliton.

\end{proof}

\section{Acknowledgements}
During the writing of this paper, the author was partially supported by NSF grant DMS-2153750.

\bibliography{biblio}
\bibliographystyle{alpha}
\end{document}